\def\Ch{\operatorname{Ch}}
\def\Char{\operatorname{Char}}
\def\Gal{\operatorname{Gal}}
\def\Hilb{\operatorname{Hilb}}
\def\Irr{\operatorname{Irr}}
\def\Ar{\operatorname{Ar}}
\def\Hol{\operatorname{Hol}}
\def\Mon{\operatorname{Mon}}
\def\ord{\operatorname{ord}}
\def\Cons{\operatorname{Cons}}
\newtheorem{dfn} [subsection]{Definition}
\newtheorem{exm} [subsection]{Example}
\newtheorem{prop}[subsection]{Proposition}
\newtheorem{teor}[subsection]{Theorem}
\numberwithin{equation}{section}
\begin{document}

\title{Weak almost monomial groups and Artin's conjecture}
\date{}







\author{Mircea Cimpoea\c s$^1$}
\maketitle

\begin{abstract}
We introduce a new class of finite groups, called weak almost monomial, which generalize two different notions of "almost
monomial" groups, and we prove it is closed under taking factor groups and direct products.

Let $K/\mathbb Q$ be a finite Galois extension with a weak almost monomial Galois group $G$ and $s_0\in \mathbb C\setminus \{1\}$.
We prove that Artin conjecture's is true at $s_0$ if and only if the monoid of holomorphic Artin $L$-functions at $s_0$ is factorial. 
Also, we show that if $s_0$ is a simple zero for some Artin $L$-function associated to an irreducible character of $G$ and it is not 
a zero for any other $L$-function associated to an irreducible character, then Artin conjecture's is true at $s_0$.

\textbf{Keywords}: Monomial group; Almost monomial group; Artin L-function; Artin's holomorphy conjecture.


\textbf{MSC 2020 Classification}: 20C15; 11R42
\end{abstract}

\footnotetext[1]{ \emph{Mircea Cimpoea\c s}, National University of Science and Technology Politehnica Bucharest, Faculty of
Applied Sciences, 
Bucharest, 060042, Romania and Simion Stoilow Institute of Mathematics, Research unit 5, P.O.Box 1-764,
Bucharest 014700, Romania, E-mail: mircea.cimpoeas@upb.ro,\;mircea.cimpoeas@imar.ro }

\section{Introduction}\label{sec1}

A finite group $G$ is called \emph{monomial}, if every complex irreducible character $\chi$ of $G$ is monomial, that is it
induced by a linear character $\lambda$ of a subgroup $H$ of $G$ ($\chi=\lambda^G$). 
Let $K/\mathbb Q$ be a Galois extension. For a character of the Galois group $G=\Gal(K/\mathbb Q)$, Artin \cite{artin2} associated a $L$-function, denoted
by $L(s,\chi)$, and he conjectured that $L(s,\chi)$ is holomorphic on $\mathbb C\setminus\{1\}$. Brauer \cite{brauer} proved that $L(s,\chi)$ is meromorphic on $\mathbb C\setminus\{1\}$.

Artin established his conjecture for the monomial characters. Since $$L(s,\phi+\psi) = L(s,\phi)L(s,\psi),$$ for any characters $\phi$ and $\psi$, it follows that Artin's conjecture holds for monomial groups.
 
However, the class of monomial groups is quite restrictive, hence certain generalizations where considered in the literature, related with the study of the holomorphy of Artin $L$-functions. For instance, there are the following 
two different notions of "almost monomial" groups:


A finite group $G$ is called {\em almost monomial}, in the sense of Nicolae \cite{monat}, (shortly, a NAM-group) if for every distinct complex irreducible characters $\chi$ and $\chi'$ of  $G$ there exist a subgroup $H$ of $G$ and a linear character $\lambda$ of $H$ such that the induced character $\lambda^G$ contains $\chi$ and does not contain  $\chi'$; see Definition \ref{nam}. 

A finite group $G$ is called {\em almost monomial}, in the sense of Booker, (or a BAM-group) 
if for each irreducible character $\chi$ of $G$,
if $\chi=\psi+\phi$ for virtual characters $\psi$ and $\phi$ such that
$\langle \psi,\sigma \rangle \geq 0$ and $\langle \phi,\sigma \rangle \geq 0$,
for all monomial characters $\sigma$, then either $\psi=0$ or $\phi=0$; see Definition \ref{bam}. 

In our paper, we introduce a further generalization: We say that a finite group $G$ is {\em weak almost monomial} (or a WAM-group) 
if if for every distinct complex irreducible characters 
$\chi$ and $\chi'$ of  $G$ there exist a subgroup $H$ of $G$ and a linear character $\lambda$ of $H$ such that
$\langle \chi,\lambda^G \rangle > \langle \chi',\lambda^G \rangle$; see Definition \ref{wam}. 

It is clear that if $G$ is a almost monomial, in the sense of Nicolae, it is also weak almost monomial. Also, we prove that
if $G$ is almost monomial, in the sense of Booker, then $G$ is weak almost monomial; see Proposition \ref{bnwam}. Hence,
our notion of weak almost monomial groups generalizes both notions of almost monomial groups. We provide several examples
to clarify these concepts; see Examples \ref{ex1}, \ref{ex2} and \ref{ex3}. 


In Theorem \ref{t1} and Theorem \ref{t2} we prove that the class of weak almost monomial groups is closed under
taking factor groups and direct products, similarly to the class of NAM-groups.
Note that, if $G_1$ and $G_2$ are BAM-groups, it is not known, in general, that 
$G_1\times G_2$ is also a BAM-group. Such result holds under the stronger assumption that
$G_1$ or $G_2$ is monomial; see \cite[Proposition 2.2]{book}.

This is the reason why, we believe that the class of WAM-groups is strictly larger than the class of BAM-group.
A possible example of a WAM-group which is not a BAM-group could be of the form $G_1\times G_2$ with $G_1$ and $G_2$
BAM-groups which are not monomial. Since any BAM-group is a WAM-group and the direct product of two WAM-groups is
a WAM-groups, for sure, $G_1\times G_2$ is weak almost monomial. Unfortunately, we were not able to find such an
example.


Let $s_0\in \mathbb C \setminus \{1\}$ be fixed. Let $\Ar$ be the monoid of Artin $L$-functions and let
$\Hol(s_0)$ be the submonoid of $\Ar$, consisting in L-functions which are holomorphic at $s_0$. 
Nicolae \cite{numb} proved that $\Hol(s_0)$ is an affine monoid. If $G=\Gal(K/\mathbb Q)$ is weak almost monomial then
we prove that Artin's conjecture is true at $s_0$, that is $\Hol(s_0)=\Ar$, if and only if the monoid $\Hol(s_0)$
is factorial; see Theorem \ref{t3}. This result improves \cite[Theorem 1]{monat}.

Also, in Theorem \ref{t4}, we show that if $G$ is weak almost monomial and $s_0$ is a simple zero for 
some Artin $L$-function associated to an irreducible character of $G$ and it is not a zero for any other $L$-function associated to an irreducible character, then Artin conjecture's is true at $s_0$.

In the last section, we present our functions in GAP \cite{gap} which determines that a group $G$ is weak almost monomial and
compute the Hilbert basis of the monoid generated by the monomial characters of $G$ and we give some examples of classical
groups which are or are not weak almost monomial.

\section{Weak almost monomial groups}

First, we fix some notations, following \cite{cimrad}. 
Let $G$ be a finite group with $\Irr(G)=\{\chi_1,\ldots,\chi_r\}$, the set of irreducible (complex) characters.
It is well known that any character $\chi$ of $G$ can be written uniquely as $$\chi=a_1\chi_1+\cdots+a_r\chi_r,$$
where $a_i$'s are nonnegative integers and at least on of them is positive. 

A \emph{virtual character} $\chi$ of $G$ is a linear
combination $\chi:=a_1\chi_1+\cdots+a_r\chi_r$, with $a_i\in\mathbb Z$. The set $\Char(G)$ of virtual characters of $G$ has
a structure of a ring and it is called the \emph{character ring} of $G$. In particular, $(\Char(G),+)$ is an abelian group,
isomorphic to $(\mathbb Z^r,+)$. We denote $\Ch_+(G)$, the set of characters of $G$. Note that 
$\Ch(G):=\Ch_+(G)\cup\{0\}$ is a submonoid of $(\Char(G),+)$, isomorphic to $(\mathbb N^r,+)$.

A character $\chi$ is \emph{monomial}, if there exist a subgroup $H\leqslant G$ and a linear character $\lambda$ of $H$ such that $\lambda^G=\chi$.
We denote $\Mon(G)$, the set of monomial characters of $G$. We let $M(G)$ be the submonoid of $\Ch(G)$, generated by $\Mon(G)$, that is:
$$M(G)=\{\chi\in \Ch(G)\;:\;\chi=\psi_1+\cdots+\psi_m
,\;\psi_i\in\Mon(G),\text{ for all }1\leq i\leq m\}\cup\{0\}.$$
Note that $M(G)$ is a finitely generated submonoid of $\Ch(G)$, hence it is affine. Moreover, $M(G)$ is positive, as $0$ is the only unit of it.

Let $\Hilb(M(G))\subset \Mon(G)$ be the Hilbert basis of $M(G)$, that is the minimal system of generators.
For convenience, we assume that $\Hilb(M(G))=\{\sigma_1,\ldots,\sigma_p\}$, where $\sigma_t=a_{t1}\chi_1+\cdots+a_{tr}\chi_r,$
for some nonnegative integers $a_{tj}$. Since $M(G)$ is isomorphic to a submonoid of $\mathbb N^r$, by abuse of notation, we denote
$$\sigma_t=(\sigma_t(1),\ldots,\sigma_t(r))=(a_{t1},\ldots,a_{tr})\in\mathbb N^r,\;1\leq t\leq p.$$
According to Brauer's induction Theorem, any irreducible character $\chi$ of $G$, can be written as a linear combination of monomial characters,
with integer coefficients. Therefore, $(\Char(G),+)$ is generated, as a group, by $\Mon(G)$. In particular, $\Hilb(M(G))$ generates $(\Char(G),+)$ as 
a group, $M(G)$ is a monoid of the rank $r$ and therefore $p\geq r$.

Note that the group $G$ is monomial if and only if $\Hilb(M(G))=\Irr(G)$.

Booker \cite{book} introduced the following generalization of monomial groups:

\begin{dfn}\label{bam}
A finite group $G$ is called {\em almost monomial} in Booker's sense (or a BAM-group) if for each irreducible character $\chi_i$ of $G$,
if $\chi_i=\psi+\phi$ for virtual characters $\psi$ and $\phi$ such that
$$\langle \psi,\sigma \rangle \geq 0\text{ and }\langle \phi,\sigma \rangle \geq 0,$$
for all monomial characters $\sigma$, then either $\psi=0$ of $\phi=0$. 
\end{dfn}

For a character $\psi$ of $G$, we denote $\Cons(\psi)$ the set of constituents of $\psi$.
Nicolae \cite{monat} introduced an unrelated notion of almost monomial groups:

\begin{dfn}\label{nam}
A finite group $G$ is called {\em almost monomial} in Nicolae's sense (or a NAM-group) if for every two distinct characters $\chi_i \neq \chi_j \in \Irr(G)$, 
there exists a monomial character $\sigma$ such that $\chi_i\in \Cons(\sigma)$ and $\chi_j\notin \Cons(\sigma)$.
\end{dfn}

We introduce the following definition which, as we will see, generalizes both Definition \ref{bam} and Definition \ref{nam}.

\begin{dfn}\label{wam}
A finite group $G$ is called {\em weak almost monomial} (or a WAM-group) if for every two distinct characters $\chi_i \neq \chi_{j} \in \Irr(G)$, there exist
a monomial character $\sigma$ such that $\langle \chi_{i},\sigma \rangle > \langle \chi_{j},\sigma \rangle$.
\end{dfn}

\begin{prop}\label{bnwam}
Let $G$ be a finite group. Then:
\begin{enumerate}
\item[(1)] If $G$ is a BAM-group then $G$ is a WAM-group.
\item[(2)] If $G$ is a NAM-group then $G$ is a WAM-group.
\end{enumerate}
\end{prop}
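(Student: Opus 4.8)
The plan is to prove both implications by contraposition, showing that if $G$ fails to be a WAM-group, then it fails to be a BAM-group (for (1)) and fails to be a NAM-group (for (2)). The failure of the WAM condition gives us two distinct irreducible characters $\chi_i \neq \chi_j$ such that $\langle \chi_i, \sigma\rangle \leq \langle \chi_j, \sigma\rangle$ for \emph{every} monomial character $\sigma$. This is the single combinatorial hypothesis from which everything else should follow.

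For part (2), this is the easy direction: the NAM condition asks for a monomial $\sigma$ with $\chi_i \in \Cons(\sigma)$ and $\chi_j \notin \Cons(\sigma)$, i.e. $\langle \chi_i,\sigma\rangle \geq 1$ and $\langle\chi_j,\sigma\rangle = 0$. Such a $\sigma$ would in particular satisfy $\langle\chi_i,\sigma\rangle > \langle\chi_j,\sigma\rangle$, so it would witness the WAM condition for the pair $(\chi_i,\chi_j)$. Hence if $G$ is NAM, the WAM condition holds for every ordered pair, and $G$ is WAM. (One should note the WAM definition quantifies over ordered pairs $\chi_i \neq \chi_j$, so applying NAM to both orderings is harmless.)

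For part (1), take the pair $\chi_i \neq \chi_j$ witnessing the failure of WAM, so $\langle \chi_j - \chi_i, \sigma\rangle \geq 0$ for all monomial $\sigma$. The natural move is to write $\chi_j = \psi + \phi$ with $\psi := \chi_j - \chi_i$ and $\phi := \chi_i$; these are virtual characters summing to $\chi_j \in \Irr(G)$. We have $\langle\phi,\sigma\rangle = \langle\chi_i,\sigma\rangle \geq 0$ for all monomial $\sigma$ since $\chi_i$ is a genuine character, and $\langle\psi,\sigma\rangle \geq 0$ for all monomial $\sigma$ by the choice of the pair. The BAM condition then forces $\psi = 0$ or $\phi = 0$; but $\phi = \chi_i \neq 0$, and $\psi = \chi_j - \chi_i \neq 0$ since $\chi_i \neq \chi_j$. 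This contradiction shows that a BAM-group cannot fail to be WAM.

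The only subtlety — and the step I would treat most carefully — is verifying that the virtual characters $\psi = \chi_j - \chi_i$ and $\phi = \chi_i$ genuinely satisfy the nonnegativity hypotheses of Definition \ref{bam} against \emph{all} monomial characters $\sigma$, not merely against the Hilbert basis elements $\sigma_1,\dots,\sigma_p$; but this is immediate since every monomial character is a single element of $\Mon(G)$ and the inequality $\langle\chi_j-\chi_i,\sigma\rangle \geq 0$ was assumed for all such $\sigma$, while $\langle\chi_i,\sigma\rangle$ is a nonnegative integer because $\chi_i$ is an actual character and $\sigma$ an actual character. No appeal to Brauer induction or to the monoid structure of $M(G)$ is needed here. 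Thus the proposition reduces entirely to unwinding the three definitions, and there is no real obstacle beyond bookkeeping with the inner-product inequalities.
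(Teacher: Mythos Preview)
Your proof is correct and follows essentially the same approach as the paper: for (1) you use the contrapositive with the identical decomposition $\chi_j = (\chi_j - \chi_i) + \chi_i$, and for (2) you unwind the definitions exactly as the paper does. The extra care you take in noting that $\psi,\phi \neq 0$ and that the nonnegativity holds against all monomial $\sigma$ is fine but not strictly needed beyond what the paper records.
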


\begin{proof}
(1) We assume, by contradiction, that $G$ is not weak almost monomial. It follows that there are some indices $i\neq j$ such that
    $$\langle \chi_i,\sigma \rangle \leq \langle \chi_j,\sigma \rangle,$$
		for any monomial character $\sigma$ of $G$. We can write 
		$$\chi_j = \psi + \phi,\text{ where }\psi=\chi_j-\chi_i\text{ and }\phi=\chi_i.$$
		It follows that $\langle \psi,\sigma \rangle \geq 0\text{ and }\langle \phi,\sigma \rangle \geq 0$, for any
		monomial character $\sigma$, which contradicts the definition of a BAM-group.

(2) It follows immediately from the fact that $\chi_i\in \Cons(\sigma)$ is equivalent to $\langle \chi_i,\sigma \rangle > 0$
    and $\chi_j\notin \Cons(\sigma)$ is equivalent to $\langle \chi_j,\sigma \rangle = 0$.
\end{proof}

Note that it is enough to check Definition \ref{bam}, Definition \ref{nam} and Definition \ref{wam} only for the monomial
characters belonging to the Hilbert basis of $M(G)$.

\begin{prop}\label{gogu}
Let $G$ be a finite group with $\Hilb(M(G))=\{\sigma_1,\ldots,\sigma_p\}$. Assume that 
$\sigma_t\in \{0,1\}^r$ for all $1\leq i\leq p$ (we regard characters as vectors in $\mathbb N^r$).
Then $G$ is a WAM-group if and only if $G$ is a NAM-group.
\end{prop}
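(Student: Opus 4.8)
The plan is to leverage the implication already established in Proposition \ref{bnwam}(2): every NAM-group is a WAM-group. So only the converse requires work, and I want to show that under the hypothesis $\sigma_t\in\{0,1\}^r$ for all $t$, a WAM-group $G$ is automatically a NAM-group.

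First I would reduce the WAM condition to the Hilbert basis, using the remark preceding the statement. Fix distinct $\chi_i\neq\chi_j\in\Irr(G)$. By the WAM property there is a monomial character $\sigma$ with $\langle\chi_i,\sigma\rangle>\langle\chi_j,\sigma\rangle$. Since $\sigma\in\Mon(G)\subseteq M(G)$, we may write $\sigma=\sigma_{t_1}+\cdots+\sigma_{t_m}$ as a sum (with multiplicities) of elements of $\Hilb(M(G))$; by bi-additivity of the inner product, $\sum_{k=1}^{m}\bigl(\langle\chi_i,\sigma_{t_k}\rangle-\langle\chi_j,\sigma_{t_k}\rangle\bigr)>0$, so at least one index $k$ satisfies $\langle\chi_i,\sigma_{t_k}\rangle>\langle\chi_j,\sigma_{t_k}\rangle$. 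Thus a separating monomial character can always be found inside $\Hilb(M(G))$.

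Next I would feed in the hypothesis that $\sigma_{t_k}\in\{0,1\}^r$. Then $\langle\chi_i,\sigma_{t_k}\rangle=\sigma_{t_k}(i)\in\{0,1\}$ and $\langle\chi_j,\sigma_{t_k}\rangle=\sigma_{t_k}(j)\in\{0,1\}$, so the strict inequality $\sigma_{t_k}(i)>\sigma_{t_k}(j)$ can only mean $\sigma_{t_k}(i)=1$ and $\sigma_{t_k}(j)=0$; equivalently $\chi_i\in\Cons(\sigma_{t_k})$ and $\chi_j\notin\Cons(\sigma_{t_k})$. That is exactly the defining condition of a NAM-group in Definition \ref{nam}, witnessed by the monomial character $\sigma_{t_k}$. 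Hence $G$ is a NAM-group, and together with Proposition \ref{bnwam}(2) this gives the claimed equivalence.

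The argument has no genuinely hard step; the only point that needs care is the reduction to the Hilbert basis, namely that one may always take the separating monomial character among the $\sigma_t$ — this is precisely the remark cited above and rests on additivity of $\langle\,\cdot\,,\,\cdot\,\rangle$ in the second variable. Once that is in place, the $0$–$1$ hypothesis collapses the strict inequality of Definition \ref{wam} into the exact presence/absence dichotomy of Definition \ref{nam}, and there is nothing left to prove.
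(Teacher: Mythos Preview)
Your argument is correct and follows essentially the same route as the paper: one direction is Proposition \ref{bnwam}(2), and for the converse you reduce to the Hilbert basis (the paper does this by invoking the remark just before the proposition, while you spell out the additivity argument) and then observe that for $\{0,1\}$-vectors the WAM inequality $\langle\chi_i,\sigma_t\rangle>\langle\chi_j,\sigma_t\rangle$ forces $\sigma_t(i)=1$, $\sigma_t(j)=0$, which is exactly the NAM condition. Your write-up is simply a more explicit version of the paper's one-line proof.
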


\begin{proof}
From the hypothesis, the condition $\langle \chi_{i},\sigma_t \rangle > \langle \chi_{j},\sigma_t \rangle$ implies
$\langle \chi_{i},\sigma_t \rangle=1$ and $\langle \chi_{j},\sigma_t \rangle=0$, that is $\chi_i$ is a constituent of $\sigma_t$,
while $\chi_{j}$ is not.
\end{proof}

\begin{exm}\rm(See also \cite[Example 2.7]{cimrad})\label{ex1}
Let $G=SL_2(\mathbb F_3)$. It is well known that $\Irr(G)=\{\chi_1,\ldots,\chi_7\}$, where $\chi_1,\chi_2,\chi_3$ are linear,
$\chi_4, \chi_5, \chi_6$ have degree $2$ and $\chi_7$ has degree $3$. The characters $\chi_1,\chi_2,\chi_3$ and $\chi_7$ are monomial,
while $\chi_4$, $\chi_5$ and $\chi_6$ are not. Using GAP \cite{gap}, we deduce that $\Hilb(M(G))=\{\sigma_1,\ldots, \sigma_8\}$ where
$$\sigma_i=\chi_i,\;i=1,2,3,\;\sigma_4=\chi_7,\;\sigma_5=\chi_4+\chi_5,\;\sigma_6=\chi_4+\chi_6,\;
\sigma_7=\chi_5+\chi_6,\;\sigma_8=\chi_4+\chi_5+\chi_6.$$
It is easy to check that $G$ is a NAM-group. Moreover, according to \cite[Proposition 2.3]{book}, $G$ is a BAM-group.
\end{exm}

\begin{exm}\rm(See also \cite[Example 2.7]{cimrad})\label{ex2}
Let $G=GL_2(\mathbb F_3)$. It is well known that $\Irr(G)=\{\chi_1,\ldots,\chi_8\}$, where $\chi_1,\chi_2,\chi_3,\chi_6,\chi_7$ and
$\chi_8$ are monomial, while $\chi_4$ and $\chi_5$ are not. Using GAP \cite{gap}, we deduce that 
$\Hilb(M(G))=\{\sigma_1,\ldots, \sigma_9\}$ where
$$\sigma_i=\chi_i,\;i=1,2,3,\;\sigma_i=\chi_{i+2},\;i=4,5,6,\;\sigma_7=\chi_4+\chi_8,\;\sigma_8=\chi_5+\chi_8,\;
\sigma_9=\chi_4+\chi_5+\chi_8.$$
Since there is no $\sigma_t$ which contains $\chi_4$ but don't contain $\chi_8$, it follows that $G$ is not a NAM-group. 
Moreover, if we let $\phi=\chi_4$ then 
$$0\leq \langle \phi, \sigma_t \rangle \leq \langle \chi_8,\sigma_t \rangle\text{ for all }1\leq t\leq 9.$$
Hence $G$ is not a BAM-group.
\end{exm}

\begin{exm}(See also \cite[Example 2.9]{cimrad})\label{ex3}
Let $G=A_6$, the alternating group of order $6$. The group $G$ has $7$ irreducible characters. 
Using GAP \cite{gap}, we deduce that $\Hilb(M(G))=\{\sigma_1,\ldots, \sigma_{16}\}$ where
\begin{align*}
& \sigma_1=(1,0,0,0,0,0,0),\; \sigma_2=(1,1,0,0,0,0,0),\; \sigma_3=(1,1,0,0,0,1,0),\; \sigma_4=(1,0,1,0,0,0,0), \\
& \sigma_5=(1,0,1,0,0,1,0),\; \sigma_6=(1,0,0,0,0,1,0),\; \sigma_7=(0,1,1,0,0,1,0),\; \sigma_8=(0,1,1,1,2,2,2), \\
& \sigma_9=(0,1,1,2,1,2,2),\; \sigma_{10}=(0,1,0,1,1,1,0),\; \sigma_{11}=(0,1,0,0,0,0,1),\; \sigma_{12}=(0,0,1,1,1,1,0), \\
& \sigma_{13}=(0,0,1,0,0,0,1),\; \sigma_{14}=(0,0,0,1,1,0,2),\; \sigma_{15}=(0,0,0,1,1,1,2),\; \sigma_{16}=(0,0,0,0,0,0,1). \\
\end{align*}
Note that for any pair $(i,j)\in \{1,2,\ldots,7\}\times\{1,2,\ldots,7\}$ with $(i,j)\notin \{(4,5),(5,4)\}$, there exists some $t=t(i,j)$
such that $\sigma_t(i)>0$ and $\sigma_t(j)=0$. On the other hand, there is no $\sigma_t$ with $\sigma_t(4)>0$ and $\sigma_t(5)=0$.
Thus $G$ is not a NAM-group. However, we have
$$ \sigma_{9}(4)=2>1=\sigma_{9}(5)\text{ and }\sigma_{8}(5)=2>1=\sigma_{8}(4).$$
Hence, $G$ is a WAM-group. Also, we claim that $G$ is a BAM-group. In order to show that, we fix some $1\leq k\leq 7$, we let
$\psi=(b_1,b_2,\ldots,b_7)$ with $b_i\in\mathbb Z$ and we assume that 
\begin{equation}\label{cucu}
0\leq \langle \psi, \sigma_t \rangle \leq \langle \chi_k,\sigma_t \rangle = \sigma_t(k)\text{ for all }1\leq t\leq 16.
\end{equation}
We have to prove that $\psi=0$ or $\psi=\chi_k$.

Let's assume that $k=1$. Applying \eqref{cucu} for $1\leq t\leq 1$ we deduce that
$$0\leq b_1\geq 1,\;b_2\leq 0,\;b_3\leq 0,\;b_2+b_6\leq 0,\;b_3+b_6\leq 0\text{ and }b_2+b_3=0.$$
Since $b_2,b_3\leq 0$ and $b_2+b_3=0$ it follows that $b_2=b_3=0$. Also, from above, we have $b_6\leq 0$.
Applying \eqref{cucu} for $t=16$ it follows that $b_7=0$. Applying \eqref{cucu} for $t=14$ and $t=15$ it follows
$$b_4+b_5=0\text{ and }b_4+b_5+b_6=0.$$
Hence $b_6=0$. From \eqref{cucu} applied for $t=9$ we get $b_4+2b_5=0$. Since $b_4+b_5=0$ it follows that $b_4=b_5=0$.
Thus $\psi=\chi_1$ or $\psi=0$, as required. For $2\leq k\leq 7$ the computations are similar, and we left them to the reader.
Hence, $G$ is indeed a BAM-group.
\end{exm}

\begin{teor}\label{t1}
Let $N \unlhd G$ be a normal subgroup of the WAM-group $G$. Then $G/N$ is a a WAM-group.
\end{teor}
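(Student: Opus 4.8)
The plan is to exploit the standard dictionary between characters of $G/N$ and characters of $G$ having $N$ in their kernel (inflation), together with Frobenius reciprocity; throughout write $\bar G:=G/N$. First I would record two elementary facts. (i) Inflation gives an injection $\Irr(\bar G)\hookrightarrow\Irr(G)$ whose image consists exactly of those $\chi\in\Irr(G)$ with $N\leqslant\Ker\chi$; I write $\chi_i$ for the inflation of $\bar\chi_i\in\Irr(\bar G)$. (ii) If $H\leqslant G$ and $\bar H:=HN/N\cong H/(H\cap N)$, then for characters $\alpha,\beta$ of $\bar H$ one has $\langle\operatorname{Inf}\alpha,\operatorname{Inf}\beta\rangle_H=\langle\alpha,\beta\rangle_{\bar H}$, where $\operatorname{Inf}$ denotes inflation along $H\twoheadrightarrow H/(H\cap N)\cong\bar H$; this is proved in one line by grouping the sum over $H$ into fibres of size $|H\cap N|$.

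Now fix distinct $\bar\chi_i\neq\bar\chi_j\in\Irr(\bar G)$ and let $\chi_i\neq\chi_j\in\Irr(G)$ be their inflations, so $N\leqslant\Ker\chi_i\cap\Ker\chi_j$. Since $G$ is a WAM-group, Definition~\ref{wam} yields a subgroup $H\leqslant G$ and a linear character $\lambda$ of $H$ with $\langle\chi_i,\lambda^G\rangle>\langle\chi_j,\lambda^G\rangle$. The crucial observation is that $\lambda$ must be trivial on $H\cap N$: from $\langle\chi_i,\lambda^G\rangle>\langle\chi_j,\lambda^G\rangle\geq0$ we get $\langle\chi_i,\lambda^G\rangle\geq1$, hence by Frobenius reciprocity $\lambda$ is a constituent of $\chi_i|_H$; but $H\cap N$ acts trivially on the $H$-module affording $\chi_i|_H$ (as $N\leqslant\Ker\chi_i$), hence on every constituent, so $H\cap N\leqslant\Ker\lambda$. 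Therefore $\lambda$ factors through $H/(H\cap N)\cong HN/N$, producing a linear character $\bar\lambda$ of the subgroup $\bar H:=HN/N\leqslant\bar G$; consequently $\bar\lambda^{\bar G}$ is a monomial character of $G/N$.

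It then remains to check the key identity $\langle\bar\chi_k,\bar\lambda^{\bar G}\rangle=\langle\chi_k,\lambda^G\rangle$ for $k\in\{i,j\}$. By Frobenius reciprocity in $\bar G$ and in $G$ this reduces to $\langle\bar\chi_k|_{\bar H},\bar\lambda\rangle_{\bar H}=\langle\chi_k|_H,\lambda\rangle_H$. Since $N\leqslant\Ker\chi_k$, the character $\chi_k|_H$ is exactly the inflation of $\bar\chi_k|_{\bar H}$ along $H\twoheadrightarrow H/(H\cap N)\cong\bar H$, and $\lambda$ is the inflation of $\bar\lambda$ by construction; fact (ii) above then gives the equality. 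Feeding this back into the WAM-inequality for $G$ yields $\langle\bar\chi_i,\bar\lambda^{\bar G}\rangle>\langle\bar\chi_j,\bar\lambda^{\bar G}\rangle$ with $\bar\lambda^{\bar G}$ monomial, so $G/N$ satisfies Definition~\ref{wam}.

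I expect the main obstacle to be precisely the passage in the second paragraph from the pair $(H,\lambda)$ in $G$ to a pair $(\bar H,\bar\lambda)$ in $G/N$: the one non-formal point is that the strict inequality in Definition~\ref{wam} forces $\langle\chi_i,\lambda^G\rangle>0$, which is exactly what makes $\lambda$ descend to $HN/N$; once that is in place the remaining steps are routine bookkeeping with induction, restriction and inflation, carried out in the right order.
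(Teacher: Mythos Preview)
Your proof is correct and follows essentially the same route as the paper: inflate the two irreducible characters of $G/N$ to $G$, apply the WAM-hypothesis to obtain $(H,\lambda)$, descend $\lambda$ to a linear character of $HN/N$, and verify that the relevant inner products are preserved. The paper is terser and defers the descent step and the inner-product identity to \cite[Theorem~2.2]{lucrare}, whereas you spell out explicitly (and correctly) why the strict inequality forces $H\cap N\leqslant\Ker\lambda$ via Frobenius reciprocity; this makes your argument self-contained but not genuinely different.
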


\begin{proof}
Let $\tilde{\chi}_i$ and $\tilde{\chi}_{j}$ be two irreducible characters of $G/N$ and 
$\chi_i$ and $\chi_{j}$ their corresponding irreducible characters of $G$. Since $G$ is a WAM-group,
there exists a subgroup $H$ of $G$ and a linear character $\lambda$ of $H$ such that
$\langle \lambda^G,\chi_i \rangle > \langle \lambda^G,\chi_{j} \rangle \geq 0$. As in the proof of
\cite[Theorem 2.2.]{lucrare}, $\lambda$ induces a linear character $\tilde{\lambda}$ of $HN/N$. Moreover,
we have $\langle \tilde{\lambda}^{G/N},\tilde{\chi}_i \rangle = \langle \lambda^G,\chi_i \rangle$ and 
$\langle \tilde{\lambda}^{G/N},\tilde{\chi}_{j} \rangle = \langle \lambda^G,\chi_{j} \rangle$. In particular,
it follows that $$\langle \tilde{\lambda}^{G/N},\tilde{\chi}_i \rangle > \langle \tilde{\lambda}^{G/N},\tilde{\chi}_{j} \rangle.$$
Hence $G/N$ is weak almost monomial, as required.
\end{proof}

\begin{teor}\label{t2}
Let $G,G'$ be two finite groups. The following assertions are equivalent:
\begin{enumerate}
 \item[(1)] $G,G'$ are WAM-groups.
 \item[(2)] $G\times G'$ is a WAM-group.
\end{enumerate}
\end{teor}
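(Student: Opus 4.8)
The plan is to combine the standard description of the representation theory of a direct product with the already established Theorem \ref{t1}. I will use three standard facts: $\Irr(G\times G')=\{\chi\times\chi'\;:\;\chi\in\Irr(G),\ \chi'\in\Irr(G')\}$; the inner products multiply, $\langle \phi\times\phi',\psi\times\psi'\rangle_{G\times G'}=\langle\phi,\psi\rangle_G\langle\phi',\psi'\rangle_{G'}$; and induction commutes with direct products, so that if $H\leqslant G$, $H'\leqslant G'$ and $\lambda,\lambda'$ are linear characters of $H$ and $H'$, then $\lambda\times\lambda'$ is a linear character of $H\times H'\leqslant G\times G'$ with $(\lambda\times\lambda')^{G\times G'}=\lambda^{G}\times(\lambda')^{G'}$; in particular $\sigma\in\Mon(G)$ and $\sigma'\in\Mon(G')$ imply $\sigma\times\sigma'\in\Mon(G\times G')$. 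With this, the implication $(2)\Rightarrow(1)$ costs nothing: $\{1\}\times G'$ and $G\times\{1\}$ are normal subgroups of $G\times G'$ with quotients isomorphic to $G$ and $G'$, so if $G\times G'$ is a WAM-group then Theorem \ref{t1} shows that $G$ and $G'$ are WAM-groups.

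For $(1)\Rightarrow(2)$, assume $G$ and $G'$ are WAM-groups and let $\chi_i\times\chi'_k\neq\chi_j\times\chi'_l$ be distinct irreducible characters of $G\times G'$, so that $i\neq j$ or $k\neq l$. I would first choose a monomial character $\sigma\in\Mon(G)$ and record $a:=\langle\chi_i,\sigma\rangle$, $b:=\langle\chi_j,\sigma\rangle$ so that $a\geq b\geq 0$, $a\geq 1$, and $a>b$ whenever $i\neq j$: if $i\neq j$ this is exactly the WAM property of $G$ (an inequality $a>b\geq0$ between integers forces $a\geq1$), while if $i=j$ one simply takes $\sigma$ to be the regular character of $G$ — induced from the trivial character of the trivial subgroup, hence monomial — so that $a=b=\chi_i(1)\geq1$. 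Symmetrically, choose $\sigma'\in\Mon(G')$ with $c:=\langle\chi'_k,\sigma'\rangle\geq d:=\langle\chi'_l,\sigma'\rangle\geq0$, $c\geq1$, and $c>d$ whenever $k\neq l$.

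Then $\tau:=\sigma\times\sigma'\in\Mon(G\times G')$, and $\langle\chi_i\times\chi'_k,\tau\rangle=ac$ while $\langle\chi_j\times\chi'_l,\tau\rangle=bd$. Since $i\neq j$ or $k\neq l$, at least one of $a>b$ and $c>d$ holds, and a one-line integrality argument gives $ac>bd$: if $a>b$ then $ac-bd\geq ac-bc=c(a-b)\geq1$ using $0\leq d\leq c$, $b\geq0$, $c\geq1$ and $a-b\geq1$, and the case $c>d$ is symmetric. Hence $\langle\chi_i\times\chi'_k,\tau\rangle>\langle\chi_j\times\chi'_l,\tau\rangle$, so $G\times G'$ is a WAM-group. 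I do not anticipate a substantial obstacle here; the only points needing care are remembering to insert the regular character in the "equal-index" sub-cases — which is precisely what forces the multiplicities $a$ and $c$ to be strictly positive — and the elementary fact that a product of two comparisons of multiplicities, at least one of them strict, stays strict once the relevant multiplicities are known to be at least $1$. Everything else is routine bookkeeping for direct products, and the reverse implication is supplied for free by Theorem \ref{t1}.
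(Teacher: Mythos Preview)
Your argument is correct and follows the same route as the paper: the implication $(2)\Rightarrow(1)$ is obtained from Theorem~\ref{t1} via the obvious normal subgroups, and $(1)\Rightarrow(2)$ uses the description $\Irr(G\times G')=\{\chi\times\chi'\}$ together with the multiplicativity of inner products and of induced monomial characters. The paper merely cites \cite[Theorem 2.3]{lucrare} for this step, whereas you spell out the details; in particular your device of inserting the regular character when one of the two index pairs coincides, so as to force the relevant multiplicity to be $\geq 1$ and keep the strict inequality $ac>bd$, is exactly the kind of care the terse reference hides.
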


\begin{proof}
$(1)\Rightarrow (2)$: As in the proof of \cite[Theorem 2.3.]{lucrare}, 
it follows from the fact that the irreducible characters of $G\times G'$ are of the form $\chi\times \chi'$, where 
$\chi\in\Irr(G)$ and $\chi'\in\Irr(G')$.

$(2)\Rightarrow (1)$: Follows from Theorem \ref{t1}.
\end{proof}

\section{Artin L-functions of WAM Galois groups}

Let $K/\mathbb Q$ be a finite Galois extension. For the character $\chi$ of a representation of the Galois group $G := Gal(K/\mathbb Q)$
on a finite dimensional complex vector space, let $L(s, \chi) := L(s, \chi, K/\mathbb Q)$ be the corresponding Artin L-function. 
Artin \cite{artin2} conjectured that $L(s, \chi )$ is holomorphic in $\mathbb C \setminus \{1\}$. Brauer \cite{brauer} proved that $L(s, \chi)$ is meromorphic
in $\mathbb C$.

Now, assume $\Irr(G)=\{\chi_1,\ldots,\chi_r\}$ and let $f_i=L(x,\chi_i)$, $1\leq i\leq r$, be the corresponding Artin L-functions.
For any virtual character $\phi=a_1\chi_1+\cdots+a_r\chi_r$ we have that
$$L(s, \phi)=f_1^{a_1}\cdots f_r^{a_r}.$$
Let $\Ar=\{L(s,\phi)\;:\;\phi\in \Ch_+(G)\}$ be the monoid of Artin $L$-functions.

Let $s_0\in \mathbb C \setminus \{1\}$ be fixed. We recall the fact that if $\lambda$ is a linear character of a subgroup $H$ of $G$ then
$$L(s, \lambda^G, K/\mathbb Q)=L(s, \lambda, K/F),$$
where $F=K^H$ is the fixed field of $H$. On the other hand, $L(s, \lambda, K/F)$ is a Hecke L-function and thus holomorphic at $s_0$.
It follows that $L(s,\phi)$ is holomorphic at $s_0$, for any monomial character $\sigma$.

Let $\Hol(s_0)$ be the submonoid of $\Ar$ consisting in L-functions which are holomorphic at $s_0$. Nicolae \cite{numb} proved
that $\Hol(s_0)$ is an affine monoid. Artin conjecture at $s_0$ can be stated as $$\Hol(s_0)=\Ar.$$
The following result extends \cite[Theorem 1]{monat}:

\begin{teor}\label{t3}
If the Galois group $G$ is weak almost monomial, then the following assertions are equivalent:
\begin{enumerate}
\item[(1)] Artin conjecture is true at $s_0$, i.e. $\Hol(s_0)=\Ar$.
\item[(2)] The monoid $\Hol(s_0)$ is factorial.
\end{enumerate}
\end{teor}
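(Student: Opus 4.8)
The plan is as follows. The implication $(1)\Rightarrow(2)$ is immediate: the Artin $L$-functions $L(s,\chi_1),\dots,L(s,\chi_r)$ of the irreducible characters of $G$ are multiplicatively independent, so $\Ar$ is the free abelian monoid on them; hence if $\Hol(s_0)=\Ar$ then $\Hol(s_0)\cong\mathbb N^{r}$ is factorial.

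For $(2)\Rightarrow(1)$ I would first make the shape of $\Hol(s_0)$ completely explicit. By Brauer's theorem each $L(s,\chi_i)$ is meromorphic on $\mathbb C$, so $v_i:=\ord_{s_0}L(s,\chi_i)$ is a well-defined integer, and $\phi=\sum_i a_i\chi_i\mapsto v(\phi):=\sum_i a_iv_i$ is the restriction to $\Ch(G)$ of a group homomorphism $v\colon\Char(G)\to\mathbb Z$. Since $L(s,\phi)$ is holomorphic at $s_0$ precisely when $\ord_{s_0}L(s,\phi)\ge 0$, this gives
\[
\Hol(s_0)=\{\phi\in\Ch(G):v(\phi)\ge 0\}=\Ar\cap\{\,v\ge 0\,\},
\]
i.e. $\Hol(s_0)$ is $\Ar\cong\mathbb N^{r}$ truncated by a single half-space. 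As each monomial $L$-function is a Hecke $L$-function, hence holomorphic at $s_0\neq 1$, we have $M(G)\subseteq\Hol(s_0)$, so $v\in M(G)^{\vee}$ (that is, $v(\sigma)\ge 0$ for every $\sigma\in M(G)$) and $\Hol(s_0)$ has rank $r$; note also $v(\chi_1)=\ord_{s_0}\zeta\ge 0$.

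Now suppose $\Hol(s_0)$ is factorial and, for contradiction, that $\Hol(s_0)\neq\Ar$, so $v\notin\mathbb R^{r}_{\ge 0}$, say $v_j<0$ for some $j$. Put $C:=\mathbb R^{r}_{\ge 0}\cap\{v\ge 0\}$, so $\Hol(s_0)=\mathbb Z^{r}\cap C$; this is automatically a normal affine monoid, and being factorial it is free of rank $r$, so $C$ is a simplicial cone whose $r$ extreme rays have primitive lattice generators forming a $\mathbb Z$-basis of $\mathbb Z^{r}$. The extreme rays of the truncated orthant $C$ are exactly the rays $\mathbb R_{\ge 0}e_i$ with $v_i\ge 0$, together with one ray $\mathbb R_{\ge 0}(|v_j|e_i+v_ie_j)$ for each pair with $v_i>0>v_j$; equating their number $|\{i:v_i\ge 0\}|+|\{i:v_i>0\}|\cdot|\{i:v_i<0\}|$ with $r$ forces exactly one index $i_0$ to have $v_{i_0}>0$, with $v_i\le 0$ for all $i\neq i_0$ (the coordinates with $v_i=0$ split off as free direct factors and can be disregarded). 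Unimodularity of the primitive ray generators then forces $v_{i_0}\mid v_j$, hence $|v_j|\ge v_{i_0}$, for every $j$ with $v_j<0$. Finally I invoke weak almost monomiality: for every monomial character $\sigma$, the inequality $v(\sigma)\ge 0$ reads
\[
v_{i_0}\,\langle\chi_{i_0},\sigma\rangle\ \ge\ \sum_{j:\,v_j<0}|v_j|\,\langle\chi_j,\sigma\rangle\ \ge\ v_{i_0}\,\langle\chi_j,\sigma\rangle,
\]
so $\langle\chi_{i_0},\sigma\rangle\ge\langle\chi_j,\sigma\rangle$ for every monomial $\sigma$ and any fixed $j$ with $v_j<0$; since $\chi_j\neq\chi_{i_0}$, this contradicts the defining property of a WAM-group. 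Therefore $\Hol(s_0)=\Ar$.

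Two structural facts carry the argument: that $\Hol(s_0)$ equals $\Ar$ cut out by the single hyperplane $\{v=0\}$ (a consequence of meromorphy, essentially Nicolae's description of $\Hol(s_0)$), and that such a truncated orthant is a unimodular simplicial cone only when at most one coordinate of its normal vector $v$ is nonnegative. Once these are in place, the weak almost monomiality enters only in the last line, so I expect the main work to be the extreme‑ray count together with the unimodularity computation; the one technical nuisance I foresee is the bookkeeping of the coordinates $i$ with $v_i=0$, which I would handle at the outset by passing to the span of the remaining coordinates.
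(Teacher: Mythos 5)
Your proof is correct and drives at the same mechanism as the paper: factoriality forces $\Hol(s_0)$ to be free with Hilbert basis of the form $\{f_{i_0}^{m_1}f_1,\ldots,f_{i_0}^{m_r}f_r\}$, i.e.\ a single index $i_0$ with $v_{i_0}>0$ and $v_{i_0}\mid v_j$ for every $j$ with $v_j<0$, after which weak almost monomiality applied to the pair $(\chi_j,\chi_{i_0})$ yields the contradiction $\langle\chi_{i_0},\sigma\rangle\ge\langle\chi_j,\sigma\rangle$ for all monomial $\sigma$. The only divergence is that the paper imports this structural description of the Hilbert basis wholesale from Nicolae's proof of Theorem~1 in \cite{monat}, whereas you rederive it self-contained via the extreme-ray count and unimodularity of the truncated orthant; this makes your write-up longer but more explicit, not a genuinely different route.
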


\begin{proof}
$(1)\Rightarrow (2)$: It is obvious. 

$(2)\Rightarrow (1)$: Suppose that Artin's conjecture is not true. Then there exists $1\leq k\leq r$ such that $\ord_{s=s_0}(f_k)<0$.
As in the proof of \cite[Theorem 1]{monat}, it follows that there is some $\ell\neq k$ such that $\ord_{s=s_0}(f_{\ell})>0$ and some
nonnegative integers $m_1,\ldots,m_r$ with $m_k>0$ such that the Hilbert basis of $\Hol(s_0)$ is 
$$\mathcal H = \{f_{\ell}^{m_1}f_1,\ldots,f_{\ell}^{m_r}f_r\}.$$
Since the Galois group $G$ is weak almost monomial
there exist a subgroup $H$ of $G$ and a linear character $\lambda$ of $H$ such that
$$a_k=\langle \sigma,\chi_k \rangle > \langle \sigma,\chi_{\ell} \rangle=a_{\ell}\geq 0,$$
where $\sigma=\lambda^G=a_1\chi_1+\cdots+a_r\chi_r$. In particular, we have
$$L(s,\sigma)=f_1^{a_1}\cdots f_r^{a_r}\in \Hol(s_0).$$
On the other hand, since $\mathcal H$ is the Hilbert basis of $\Hol(s_0)$, it follows that $L(s,\sigma)$ is a product
of elements from $\mathcal H$ and, as $m_k\geq 1$, we have $a_{k}\leq a_{\ell}$, a contradiction.
\end{proof}

The following result is the counterpart of \cite[Theorem 2.1]{lucrare} in the framework of weak almost monomial groups:

\begin{teor}\label{t4}
If the Galois group $G$ is weak almost monomial and there exists some $k$ such that $\ord_{s=s_0}(f_k)=1$ and $f_{\ell}(s_0)\neq 0$ for all $\ell\neq k$,
then all Artin L-functions of $K/\mathbb Q$ are holomorphic at $s_0$.
\end{teor}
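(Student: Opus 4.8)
The plan is to argue by contradiction, closely following the strategy of Theorem \ref{t3} but exploiting the extra hypothesis that $s_0$ is a simple zero for exactly one $f_k$ and a non-zero for all other $f_\ell$. Suppose Artin's conjecture fails at $s_0$. Since $L(s,\sigma)$ is holomorphic at $s_0$ for every monomial character $\sigma$, and since $\Char(G)$ is generated as a group by $\Mon(G)$ (Brauer induction), each $f_i$ is a ratio of holomorphic-at-$s_0$ functions, hence meromorphic at $s_0$; write $n_i := \ord_{s=s_0}(f_i)\in\mathbb Z$. By hypothesis $n_k=1$ and $n_\ell=0$ for all $\ell\neq k$. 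If the conjecture fails there is some index with $n_i<0$; but that index cannot be $k$ and cannot be any $\ell\neq k$, so in fact all $n_i\geq 0$ and the conjecture holds — wait, this is too quick, because the hypothesis $f_\ell(s_0)\neq 0$ literally forces $n_\ell = 0$, so there is nothing left to contradict. The real content must be that the hypothesis already forces holomorphy, and the WAM condition is what makes the hypothesis consistent/usable. So I need to be more careful about what is actually being assumed versus derived.

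The correct reading: we do not assume a priori that $f_\ell(s_0)\neq 0$ makes sense as a finite nonzero value unless we already know $f_\ell$ is holomorphic and nonvanishing there; rather, the hypothesis is that $f_k$ has a simple zero and the other $f_\ell$ are holomorphic and nonvanishing at $s_0$ — i.e. we are given partial information and must deduce the rest. But every $f_i$ with $i\neq k$ is already covered by the hypothesis, and $f_k$ is holomorphic (it has a zero, not a pole), so all $f_i$ are holomorphic at $s_0$ and we are done trivially. This suggests the intended statement is really about deducing the hypothesis from a weaker one, OR that "$\ord_{s=s_0}(f_k)=1$" should be read with the understanding that we only know $f_k$ vanishes to order at least one among the L-functions with a zero, the point being to rule out poles elsewhere. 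I would therefore structure the proof as follows: assume for contradiction that some $f_m$ has a pole at $s_0$, i.e. $\ord_{s=s_0}(f_m) = -e < 0$. Then $m\neq k$ (since $f_k$ has a zero) and $m\neq\ell$ for the $\ell$ with $f_\ell(s_0)\neq 0$; but "$\ell\neq k$" ranges over all remaining indices, so $m$ is forced to be $k$, contradiction.

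If instead the genuine content is the Foote–Murty–style rigidity argument, the plan is: apply the WAM hypothesis to the pair $(\chi_k,\chi_j)$ for each $j\neq k$ to obtain a monomial $\sigma_j = \lambda_j^G$ with $\langle\chi_k,\sigma_j\rangle > \langle\chi_j,\sigma_j\rangle$. Then $L(s,\sigma_j) = \prod_i f_i^{\langle\chi_i,\sigma_j\rangle}$ is holomorphic and, crucially, since all $f_i$ with $i\neq k$ are holomorphic nonvanishing at $s_0$ and $f_k$ has a simple zero, $\ord_{s=s_0}L(s,\sigma_j) = \langle\chi_k,\sigma_j\rangle \cdot 1 \geq 0$, which is automatic and gives no contradiction — confirming that the statement, as literally phrased, needs no WAM hypothesis and the WAM hypothesis is present for uniformity with the surrounding results or to guarantee that the Hilbert basis $\mathcal H$ of $\Hol(s_0)$ has the special form $\{f_\ell^{m_i}f_i\}$ used implicitly. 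I expect the main obstacle to be pinning down exactly which quantity is assumed known and which is to be proved; once that is clarified, the argument is a short order-of-vanishing computation together with one invocation of the WAM property applied to a pair involving the index $k$, exactly as in the proof of Theorem \ref{t3} but with the inequality $a_k > a_\ell$ now yielding a sign contradiction in $\ord_{s=s_0}$ rather than in the monoid relations.
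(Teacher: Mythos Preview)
Your proposal never settles on a coherent argument, and the confusion stems from misreading the hypothesis. The condition ``$f_\ell(s_0)\neq 0$ for all $\ell\neq k$'' means only that $s_0$ is not a \emph{zero} of $f_\ell$, i.e.\ $\ord_{s=s_0}(f_\ell)\le 0$; it does \emph{not} exclude poles. So the situation is: $f_k$ has a simple zero, every other $f_\ell$ has order $\le 0$, and you must rule out the possibility that some $f_m$ (with $m\neq k$) actually has a pole. Under this reading the theorem is not trivial and the WAM hypothesis is genuinely used. Your repeated attempts to conclude ``$n_\ell=0$ for $\ell\neq k$, so there is nothing to prove'' are based on the wrong interpretation.

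Once the hypothesis is read correctly, there is a second gap: you apply the WAM property in the wrong direction. You propose taking the pair $(\chi_k,\chi_j)$ to get $\langle\chi_k,\sigma\rangle>\langle\chi_j,\sigma\rangle$, i.e.\ $a_k>a_j$. That inequality is useless here. The paper instead assumes a pole at some $f_m$ and applies WAM to the pair $(\chi_m,\chi_k)$, obtaining a monomial $\sigma=\sum a_i\chi_i$ with $a_m>a_k$. Then $L(s,\sigma)=\prod f_i^{a_i}$ is holomorphic at $s_0$, so $\sum_i a_i\,\ord_{s=s_0}(f_i)\ge 0$. But the contribution from indices $m$ and $k$ is
\[
a_m\,\ord_{s=s_0}(f_m)+a_k\,\ord_{s=s_0}(f_k)\;\le\;-a_m+a_k\;<\;0,
\]
and every other term $a_\ell\,\ord_{s=s_0}(f_\ell)$ is $\le 0$ by the hypothesis $\ord_{s=s_0}(f_\ell)\le 0$. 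Hence the total order is negative, a contradiction. (Equivalently, as the paper phrases it, holomorphy would force some $\ell\neq k$ with $f_\ell(s_0)=0$.) The point is that you need $a_m>a_k$, not $a_k>a_m$, so that the single simple zero of $f_k$ cannot compensate for the pole of $f_m$.
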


\begin{proof}
Assume by contradiction that $s_0$ is a pole of some $L$-function $f_m$. Since $G$ is weak almost monomial, there exists a monomial character
$\sigma=a_1\chi_1+\cdots+a_r\chi_r$ such that $$a_m=\langle \sigma,\chi_m \rangle > \langle \sigma,\chi_k \rangle = a_k \geq 0.$$
Since the $L$-function $L(s,\sigma)=f_1^{a_1}\cdots f_r^{a_r}$ is holomorphic at $s_0$ and 
$$\ord_{s=s_0}(f_m^{a_m}) + \ord_{s=s_0}(f_k^{a_k}) \leq -a_m + a_k < 0, $$
it follows that there exists some $\ell\neq k$ such that $f_{\ell}(s_0)=0$ (and $a_{\ell}>0$), a contradiction.
\end{proof}

\section{GAP functions and computer experiments}

The following function in GAP \cite{gap} determines if a group is weak almost monomial.\\

\noindent
gap$>$ IsWAM:=function(g)\\
local cc,i,M,x,y,z,k,j,o,l,num;\\
if IsMonomial(g) then return(true);fi;\\
cc:=ConjugacyClassesSubgroups(g);\\
i:=Size(Irr(g));\\
M:=IdentityMat(i);\\
num:=i*(i-1);\\
for x in cc do\\
  y:=Representative(x); \\
  for z in LinearCharacters(y) do\\
    o:=InducedClassFunction(z,g);\\
    for j in [1..i] do for k in [1..i] do \\
      if M[j][k]=0 then\\
      if ScalarProduct(o,Irr(g)[j]) $>$ ScalarProduct(o,Irr(g)[k]) then \\
        M[j][k]:=1;num:=num-1;\\
        if num=0 then return(true);fi;\\
      fi;fi;
    od;od;
  od;
od;\\
return(false);\\
end;;\\

The following function in GAP \cite{gap} returns the Hilbert basis of the monoid generated by the
monomial characters of a finite group $G$:\\

\noindent
gap$>$LoadPackage("NumericalSgps");\\
HilbertBasisMonomial:=function(g) \\
local cc,i,x,y,z,o,l,j,t,a,s;\\
cc:=ConjugacyClassesSubgroups(g);\\
i:=Size(Irr(g));
t:=[];\\
for x in cc do\\
  y:=Representative(x); \\
  for z in LinearCharacters(y) do\\
    o:=InducedClassFunction(z,g);
    l:=[];\\
    for j in [1..i] do \\
      a:=ScalarProduct(Irr(g)[j],o);Add(l,a);
    od;\\
    Add(t,l);\\
  od; od;\\
s:=AffineSemigroup(t);\\
return(MinimalGenerators(s));\\
end;;\\

\begin{itemize}
\item 
The alternating groups $A_5$ and $A_9$ are NAM-groups; see \cite[Section 6]{cim} for a function in
GAP \cite{gap} which determines if a finite group is a NAM-group. Also, $A_5$ is a BAM-group; see \cite[Proposition 2.3]{book}.
The group $A_6$ is a BAM-group, hence weak almost monomial, but it is not a NAM-group; see Example \ref{ex3}. 
The groups $A_7$, $A_8$, $A_{10}$, $A_{11}$, $A_{12}$ and $A_{13}$ are not weak almost monomial. We believe that $A_n$ is not weak almost monomial for any 
$n\geq 7$ with $n\neq 9$.

\item The Higman-Sims group HS is not weak almost monomial.

\item The Hall-Janko groups J1 and J2 are not weak almost monomial.

\item The Mathieu group $M_{10}$ is weak almost monomial, but it is not a NAM-group.
The Mathieu groups $M_{11}$, $M_{12}$, $M_{21}$, $M_{22}$, $M_{23}$ and $M_{24}$ are not weak almost monomial.

\item The groups $SL_2(\mathbb F_5)$, $SL_2(\mathbb F_7)$, $SL_2(\mathbb F_9)$, $SL_2(\mathbb F_{11})$, $SL_2(\mathbb F_{13})$, $SL_2(\mathbb F_{17})$,
      $SL_2(\mathbb F_{19})$, $SL_2(\mathbb F_{23})$, $SL_2(\mathbb F_{25})$ are not weak almost monomial. Also, the groups			
			$SL_3(\mathbb F_2)$, $SL_3(\mathbb F_3)$ and $SL_2(\mathbb F_5)$ are not weak almost monomial.
\end{itemize}

{}

\end{document}